\renewcommand{\phi}{\varphi}
\renewcommand{\epsilon}{\varepsilon}
\renewcommand{\theta}{\vartheta}
\def\cF{\mathcal{F}}
\def\cO{\mathcal{O}}
\def\frm{\mathfrak{m}}
\def\frn{\mathfrak{n}}
\newcommand{\llbracket}{[\negthinspace[}
\newcommand{\rrbracket}{]\negthinspace]}
\newcommand{\llparenthesis}{(\negthinspace(}
\newcommand{\rrparenthesis}{)\negthinspace)}
\DeclareMathOperator{\Hom}{Hom}
\newtheorem{lemma}{Lemma}[section]
\newtheorem{theorem}[lemma]{Theorem}
\theoremstyle{definition}
\theoremstyle{remark}
\newtheorem*{remark*}{Remark}
\newtheorem*{note*}{Note}
\begin{document}

\title{Generically finite morphisms and formal neighborhoods of arcs}

\author[L.~Ein]{Lawrence~Ein}
\address{Department of Mathematics, University of
Illinois at Chicago\\ 851 South Morgan Street (M/C 249), Chicago, IL
60607-7045, USA} \email{{\tt ein@math.uic.edu}}

\author[M. Musta\c{t}\v{a}]{Mircea~Musta\c{t}\u{a}}
\address{Department of Mathematics, University of Michigan,
Ann Arbor, MI 48109, USA}
\email{{\tt mmustata@umich.edu}}

\begin{abstract}
Let $f\colon X\to Y$ be a morphism of pure-dimensional schemes of the
same dimension,  
with $X$ smooth. We prove that
if $\gamma\in J_{\infty}(X)$ is an arc on $X$ having finite order
$e$ along the ramification subscheme $R_f$ of $X$, and if its image
$\delta=f_{\infty}(\gamma)$
 on $Y$ does not lie in $J_{\infty}(Y_{\rm sing})$,
 then the induced map $T_{\gamma}J_{\infty}(X) \to
T_{\delta}J_{\infty}(Y)$ is injective, with a cokernel of dimension
$e$. In particular, if $Y$ is smooth too, and if we denote by
$\widehat{J_{\infty}(X)_{\gamma}}$ and
$\widehat{J_{\infty}(Y)_{\delta}}$ the formal neighborhoods of 
$\gamma\in J_{\infty}(X)$ and $\delta\in J_{\infty}(Y)$, then the induced morphism
$\widehat{J_{\infty}(X)_{\gamma}}\to
\widehat{J_{\infty}(Y)_{\delta}}$ is a closed embedding of
codimension $e$.

\end{abstract}

\thanks{The first author was partially supported by NSF grant
DMS-0700774, and the second author was partially supported by NSF
grant DMS-0758454 and by a Packard Fellowship}

\maketitle

\markboth{L.~Ein and M.~Musta\c{t}\u{a}}{Generically finite
morphisms and formal neighborhoods of arcs}

\section{Introduction}

All schemes and all scheme morphisms are defined over
 an algebraically closed field $k$ of arbitrary
characteristic. $X$ and $Y$ denote schemes of finite type over $k$.
We are interested in local questions, hence we may and will assume
that $X$ and $Y$ are affine.

The scheme of arcs of $X$ is denoted by $J_{\infty}(X)$. For every
$k$--algebra $A$, we have a natural bijection
$${\rm Hom}({\rm Spec}\,A,J_{\infty}(X))\simeq
{\rm Hom}({\rm Spec}\,A\llbracket t\rrbracket ,X).$$ In particular,
a $k$--valued point of $J_{\infty}(X)$ corresponds to an arc
$\gamma\colon {\rm Spec}\,k\llbracket t\rrbracket \to X$ (we stress that all arcs that we consider are $k$--valued arcs). A morphism
$f\colon X\to Y$ induces a morphism between the schemes of arcs
$f_{\infty}\colon J_{\infty}(X)\to J_{\infty}(Y)$. If $\gamma$ is an
arc in $X$, and if $Z$ is a closed subscheme of $X$ defined by the
ideal $I_Z$, then the \emph{order of vanishing} ${\rm
ord}_Z(\gamma)$ is that $e\in {\mathbf Z}_{\geq 0}\cup\{\infty\}$
such that the inverse image $\gamma^{-1}I_Z\subseteq k\llbracket
t\rrbracket$ is equal to $(t^{e})$ (with the convention that
$e=\infty$ if this ideal is zero). For an introduction to spaces of
arcs and some applications to the study of singularities of pairs,
see for example \cite{EM}.

If $f\colon X\to Y$ is a morphism of schemes of
pure dimension $n$, then we denote by $R_f$ the ramification
subscheme of $X$, defined by ${\rm Fitt}^0(\Omega_{X/Y})$, the
$0^{\rm th}$ Fitting ideal of the sheaf of relative differentials
$\Omega_{X/Y}$.  If $X$ and $Y$ are integral schemes, and
$f$ is dominant, then 
$f$ is separable if and only if $R_f$
contains no component of $X$. Note also that if both $X$ and $Y$ are
smooth and irreducible,  and $f$ is dominant, 
then $R_f$ is the degeneracy locus of the morphism
$f^*\Omega_Y^1\to\Omega_X^1$ of locally free sheaves of rank $n$. In
particular, in this case $R_f$ is either a divisor, or $R_f=X$ (when $f$ is inseparable).

If $\gamma$ is a $k$--valued point of $J_{\infty}(X)$, then we denote
by $\widehat{J_{\infty}(X)_{\gamma}}$ the formal neighborhood of $\gamma$ in $J_{\infty}(X)$,
that is, the
formal completion of $J_{\infty}(X)$ at the point $\gamma$. We put
$\widehat{{\mathbf A}^n_0}$ for the formal neighborhood of the origin in ${\mathbf
A}^n$. Note that for every arc $\gamma$ and every $e$, the closed immersion $J_{\infty}(X)\hookrightarrow J_{\infty}(X)\times {\mathbf A}^e$ given by $\alpha\to (\alpha,0)$ induces by passing to formal neighborhoods a closed immersion $\iota_{\gamma,e}\colon 
\widehat{J_{\infty}(X)_{\gamma}}\hookrightarrow
 \widehat{J_{\infty}(X)_{\gamma}}\,\widehat{\times}\,\widehat{{\mathbf
A}^e_0}$.
The following is our main result.

\begin{theorem}\label{thm1}
Let $f\colon X\to Y$ be a morphism of smooth varieties of
pure dimension $n$. If $\gamma$ is an arc on $X$ with ${\rm
ord}_{R_f}(\gamma)=e<\infty$, and if $\delta=f_{\infty}(\gamma)$,
then we have an isomorphism
$$\widehat{J_{\infty}(Y)_{\delta}}\simeq
\widehat{J_{\infty}(X)_{\gamma}}\,\widehat{\times}\,\widehat{{\mathbf
A}^e_0},$$ such that the morphism induced by $f$ corresponds to $\iota_{\gamma,e}$.
\end{theorem}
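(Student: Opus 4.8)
The plan is to reduce everything to a computation in local coordinates, followed by a diagonalisation of the linearised map via Smith normal form over $k\llbracket t\rrbracket$. All assertions are local on $X$ and on $Y$, and since the only open subset of $\Spec k\llbracket t\rrbracket$ containing the closed point is the whole space, the arc $\gamma$ factors through every Zariski neighbourhood of its center $\gamma(0)$; so I may shrink $X$ and $Y$ to affine opens carrying étale coordinate systems $x_1,\dots,x_n$ and $y_1,\dots,y_n$ with $f(X)\subseteq Y$, and I may pass to the components of $X$ and $Y$ through $\gamma$ and $\delta$ (since $e<\infty$ forces $R_f$ to be a proper subscheme of $X$, the map $f$ is then dominant and separable). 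In these coordinates $J_{\infty}(X)$ acquires coordinates $a_{i,j}$ ($1\le i\le n$, $j\ge0$) recording the Taylor coefficients of an arc $x_i(t)=\sum_j a_{i,j}t^j$ — likewise $b_{i,j}$ for $J_{\infty}(Y)$ — and $f_{\infty}$ sends the arc $x(t)$ to $\big(f_i(x_1(t),\dots,x_n(t))\big)_i$, where $f_i=f^{\sharp}(y_i)$. Writing $\alpha_{i,j}=a_{i,j}-a_{i,j}^0$ and $\beta_{i,j}=b_{i,j}-b_{i,j}^0$ for the recentered coordinates, $\widehat{J_{\infty}(X)_{\gamma}}$ is the formal spectrum of the completion of $k[\alpha_{i,j}]$ at the ideal of all $\alpha_{i,j}$, and similarly for $\widehat{J_{\infty}(Y)_{\delta}}$; moreover $f_{\infty}$ carries each $\beta_{i,j}$ into a power series in the $\alpha_{k,l}$ with $l\le j$ only, which keeps the (non-Noetherian) infinitely-many-variable situation under control.

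Taylor-expanding $f$ along $\gamma(t)$ shows that the linear part of $f_{\infty}$ in these coordinates is multiplication by the Jacobian matrix $J_f(\gamma(t))=\big(\tfrac{\partial f_i}{\partial x_k}(\gamma(t))\big)\in M_n(k\llbracket t\rrbracket)$; since $R_f$ is the zero scheme of $\det J_f$, the order of $\det J_f(\gamma(t))$ equals $e$. By the elementary divisor theorem over the discrete valuation ring $k\llbracket t\rrbracket$ one has $J_f(\gamma)=P\cdot\mathrm{diag}(t^{e_1},\dots,t^{e_n})\cdot Q$ with $P,Q\in\mathrm{GL}_n(k\llbracket t\rrbracket)$ and $e_1+\cdots+e_n=e$. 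Replacing the vectors $(\alpha_k(t))_k$ and $(\beta_i(t))_i$ by $Q\cdot(\alpha_k(t))_k$ and $P^{-1}\cdot(\beta_i(t))_i$ is a $k\llbracket t\rrbracket$-linear change of coordinates, triangular with respect to the grading by $j$, hence an automorphism of $\widehat{J_{\infty}(X)_{\gamma}}$, resp. of $\widehat{J_{\infty}(Y)_{\delta}}$; after these changes $f_{\infty}$ is given by
$$\beta_{i,j}\longmapsto\alpha_{i,\,j-e_i}+(\text{order}\ge2\text{ in the }\alpha)\quad(j\ge e_i),\qquad \beta_{i,j}\longmapsto(\text{order}\ge2)\quad(0\le j<e_i).$$
In particular the linearisation sends $\alpha_{i,l}$ to $\beta_{i,\,l+e_i}$, so it is injective with cokernel of dimension $\sum_i e_i=e$ spanned by the classes of the $\beta_{i,j}$ with $0\le j<e_i$; this gives the statement about tangent spaces.

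Finally I upgrade this to the statement about formal neighbourhoods. As $(i,j)$ ranges over the pairs with $j\ge e_i$, the images $f_{\infty}(\beta_{i,j})$ form a family $\{\alpha_{i,l}+(\text{higher order})\}_{(i,l)}$ indexed by all $(i,l)$; since a continuous $k$-algebra endomorphism of a completed polynomial ring that sends each coordinate to itself modulo higher order is an automorphism, the subalgebra of $\mathcal{O}_{\widehat{J_{\infty}(Y)_{\delta}}}$ topologically generated by the $\beta_{i,j}$ with $j\ge e_i$ maps isomorphically onto $\mathcal{O}_{\widehat{J_{\infty}(X)_{\gamma}}}$. In particular $f_{\infty}^{\sharp}$ is surjective, so $\widehat{J_{\infty}(X)_{\gamma}}\to\widehat{J_{\infty}(Y)_{\delta}}$ is a closed embedding; and the $e=\sum_i e_i$ remaining coordinates $\beta_{i,j}$ ($0\le j<e_i$), after subtracting the higher-order power series to which $f_{\infty}$ sends them, span a complementary $\widehat{{\mathbf A}^e_0}$-direction. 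This yields an isomorphism $\widehat{J_{\infty}(Y)_{\delta}}\cong\widehat{J_{\infty}(X)_{\gamma}}\,\widehat{\times}\,\widehat{{\mathbf A}^e_0}$ under which $f_{\infty}$ is the identity on the first factor and kills the coordinates of the second factor — that is, $f_{\infty}$ corresponds to $\iota_{\gamma,e}$.

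The point requiring care is exactly this last passage from the linearised picture to the formal neighbourhoods: one must deal with the infinitely many variables (which is possible because, as noted, $f_{\infty}$ respects the filtration by jet order) and check that the linear-algebra reductions — the change of variables induced by $P$ and $Q$, and the inverse-function-theorem argument producing the splitting — genuinely take place on the formal arc spaces and not merely on the pullbacks of the cotangent sheaves. Everything else is bookkeeping around the index shift $j\mapsto j-e_i$.
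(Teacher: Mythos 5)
Your argument is correct in its essentials and proceeds along the same mathematical skeleton as the paper, but it packages things differently, so a comparison is worthwhile. The common core is the elementary-divisor (Smith normal form) reduction over the discrete valuation ring $k\llbracket t\rrbracket$: this is exactly how the paper proves Theorem~\ref{thm2}, by pulling back $f^*\Omega_Y\to\Omega_X$ along $\gamma$ and diagonalizing the resulting map of $k\llbracket t\rrbracket$-modules. Where you diverge is in upgrading from the tangent/cotangent level to formal neighborhoods. The paper makes this upgrade coordinate-free: Lemma~\ref{lem1} identifies $\widehat{\cO_{J_\infty(X),\gamma}}$ with a ring $\widehat{{\rm Sym}(V)}$, and the proof of Theorem~\ref{thm1} then splits $\frm_B/\frm_B^2$ using the output of Theorem~\ref{thm2}, constructs a section $w$ of the restriction $v\colon\widehat{{\rm Sym}(U)}\to A$, and uses completeness and the graded structure to conclude $v$ is an isomorphism. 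You instead stay in explicit coordinates $\alpha_{i,j},\beta_{i,j}$, diagonalize the Jacobian via $P,Q\in\mathrm{GL}_n(k\llbracket t\rrbracket)$, and invoke a formal inverse function theorem for continuous endomorphisms of $\widehat{{\rm Sym}(V)}$. The two splitting arguments are really the same finite-filtration / associated-graded argument; yours buys an explicit description of the isomorphism in terms of the arc coordinates, while the paper's version avoids ever choosing coordinates and immediately isolates the reusable structural facts (Lemma~\ref{lem1}, Lemma~\ref{next_lemma}, Theorem~\ref{thm2}).

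Two places in your write-up are stated more casually than they deserve, and you flag one of them yourself. First, after choosing \'etale coordinates, $f^\sharp(y_i)$ lives in $\cO_X$, not in $k[x_1,\dots,x_n]$, so the literal Taylor expansion $f_i(x_1(t),\dots,x_n(t))$ only makes sense after passing to the completion $\widehat{\cO_{X,x}}\simeq k\llbracket x_1,\dots,x_n\rrbracket$ and observing that arcs (and their infinitesimal deformations with values in Artinian local rings) factor through this completion; this is precisely what the paper's Lemma~\ref{next_lemma} is doing, and your proof implicitly relies on the same fact. Second, the ``continuous endomorphism that is the identity modulo higher order is an automorphism'' claim is true for $\widehat{{\rm Sym}(V)}$ with $V$ infinite-dimensional, but it does need the observation that ${\rm gr}_{\frm}\widehat{{\rm Sym}(V)}\simeq{\rm Sym}(V)$ and that the induced map on each quotient $R/\frm^m$ has identity associated graded, hence is bijective; moreover, to realize the $\widehat{\AA^e_0}$ complement you must correct the $\beta_{i,j}$ with $j<e_i$ by \emph{pulling back} $f_\infty^\sharp(\beta_{i,j})$ through the isomorphism onto the subalgebra generated by the $\beta_{i,j}$, $j\ge e_i$, before subtracting — as written, you subtract an element of the target ring from an element of the source ring. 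None of this breaks the argument, but these are exactly the points the paper's abstraction is designed to handle cleanly.
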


It is interesting to compare this statement with the geometric
version of the Change of Variable formula for motivic integration,
due to Kontsevich \cite{Kon} and Denef and Loeser \cite{DL}. That
result says that if $f$ is \emph{birational} and if we consider
instead the finite-level truncation morphisms $f_m\colon J_m(X)\to
J_m(Y)$, with $m\geq 2e$, then on the locus where the order of
vanishing along $R_f$ is $e$, the map $f_m$ is piecewise trivial,
with fiber ${\mathbf A}^e$. In fact, the key argument in the proof
of Theorem~\ref{thm1} is similar to the one in \cite{DL}.

We will deduce Theorem~\ref{thm1} from a statement involving only
the tangent spaces of the corresponding spaces of arcs. Note that in
this case we do not need $Y$ to be smooth.

\begin{theorem}\label{thm2}
Let $f\colon X\to Y$ be a  morphism of pure-dimensional schemes of the
same dimension,  
with $X$ smooth.
If $\gamma$ is an arc in $X$ with ${\rm
ord}_{R_f}(\gamma)=e<\infty$, and if
$\delta:=f_{\infty}(\gamma)\not\in J_{\infty}(Y_{\rm sing})$, then
the induced tangent map
$$T_{\gamma}J_{\infty}(X)\to T_{\delta}J_{\infty}(Y)$$
is injective, and its cokernel has dimension $e$.
\end{theorem}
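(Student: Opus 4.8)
The plan is to compute both tangent spaces explicitly using the description of $T_\gamma J_\infty(X)$ as a module over $k\llbracket t\rrbracket$. Recall that if $\gamma\colon \Spec k\llbracket t\rrbracket\to X$ corresponds to a ring homomorphism $\gamma^\sharp\colon \cO_X\to k\llbracket t\rrbracket$, then the tangent space $T_\gamma J_\infty(X)$ is naturally identified with $\Hom_{k\llbracket t\rrbracket}(\gamma^*\Omega_X, k\llbracket t\rrbracket)$ (this follows from the functor-of-points description of $J_\infty(X)$ applied to the dual numbers $k[\epsilon]$ over $k\llbracket t\rrbracket$, i.e.\ to $k\llbracket t\rrbracket[\epsilon]/(\epsilon^2)$). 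Since $X$ is smooth of dimension $n$, the module $\gamma^*\Omega_X$ is free of rank $n$ over $k\llbracket t\rrbracket$, so $T_\gamma J_\infty(X)\cong k\llbracket t\rrbracket^{\oplus n}$. On the $Y$ side, the hypothesis $\delta\notin J_\infty(Y_{\rm sing})$ means $\ord_{Y_{\rm sing}}(\delta)<\infty$, so $\delta$ generically misses the singular locus; one then shows $T_\delta J_\infty(Y)\cong \Hom_{k\llbracket t\rrbracket}(\delta^*\Omega_Y, k\llbracket t\rrbracket)$ as well, using that $\delta^*\Omega_Y$ is a finitely generated $k\llbracket t\rrbracket$-module whose torsion is supported at finitely many points (this is where non-smoothness of $Y$ is allowed but controlled).

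Next I would identify the tangent map. Pulling back the canonical map $f^*\Omega_Y\to \Omega_X$ along $\gamma$ gives a map of $k\llbracket t\rrbracket$-modules $\delta^*\Omega_Y\to \gamma^*\Omega_X$, and the induced tangent map $T_\gamma J_\infty(X)\to T_\delta J_\infty(Y)$ is simply $\Hom_{k\llbracket t\rrbracket}(-,k\llbracket t\rrbracket)$ applied to it. So everything reduces to understanding the $k\llbracket t\rrbracket$-linear map
\[
\phi\colon \delta^*\Omega_Y\longrightarrow \gamma^*\Omega_X.
\]
Both sides, modulo torsion, are free of rank $n$; after choosing bases, $\phi$ (or rather its induced map on the free quotients, composed appropriately) is represented by an $n\times n$ matrix over $k\llbracket t\rrbracket$ whose determinant generates $\gamma^{-1}\mathrm{Fitt}^0(\Omega_{X/Y}) = \gamma^{-1}I_{R_f}$, which by hypothesis is $(t^e)$. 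By the theory of elementary divisors (Smith normal form over the PID $k\llbracket t\rrbracket$, valid since $k\llbracket t\rrbracket$ is a DVR), we can choose bases so that this matrix is diagonal $\mathrm{diag}(t^{a_1},\dots,t^{a_n})$ with $\sum a_i = e$. Dualizing, the map $k\llbracket t\rrbracket^{\oplus n}\to k\llbracket t\rrbracket^{\oplus n}$ on tangent spaces is also $\mathrm{diag}(t^{a_1},\dots,t^{a_n})$, which is visibly injective (each $t^{a_i}$ is a nonzerodivisor) with cokernel $\bigoplus_i k\llbracket t\rrbracket/(t^{a_i})$ of dimension $\sum a_i = e$ over $k$.

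The main obstacle is the bookkeeping on the $Y$ side when $Y$ is singular: I need to make sure that $T_\delta J_\infty(Y)$ is still computed by $\Hom_{k\llbracket t\rrbracket}(\delta^*\Omega_Y, k\llbracket t\rrbracket)$, and that dualizing kills exactly the torsion in $\delta^*\Omega_Y$ without introducing spurious contributions — here the hypothesis $\delta\notin J_\infty(Y_{\rm sing})$ is essential, since it guarantees $\delta^*\Omega_Y$ has rank exactly $n$ and torsion of finite length. A secondary subtlety is checking that the Fitting-ideal computation of $\det\phi$ is compatible with the choice of free quotients, i.e.\ that $\mathrm{Fitt}^0(\Omega_{X/Y})$ pulls back correctly along $\gamma$ and matches the determinant of the matrix representing $\phi$ on the torsion-free parts; this uses the right-exactness of the cotangent sequence $f^*\Omega_Y\to\Omega_X\to\Omega_{X/Y}\to 0$ together with the fact that $\Omega_X$ is locally free. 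Once these points are in place, the elementary-divisor argument is routine and yields both injectivity and the dimension count for the cokernel.
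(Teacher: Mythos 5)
Your proposal is correct and follows essentially the same route as the paper's proof: identify $T_\gamma J_\infty(X)$ and $T_\delta J_\infty(Y)$ with $\Hom_{k\llbracket t\rrbracket}(\gamma^*\Omega_X,k\llbracket t\rrbracket)$ and $\Hom_{k\llbracket t\rrbracket}(\delta^*\Omega_Y,k\llbracket t\rrbracket)$, pull back the cotangent sequence, invoke the structure theorem for finitely generated $k\llbracket t\rrbracket$-modules (Smith normal form) to diagonalize the map, and read off $\sum a_i=e$ from the Fitting ideal of $\Omega_{X/Y}$. One small clarification: the identification $T_\delta J_\infty(Y)\cong\Hom_{k\llbracket t\rrbracket}(\delta^*\Omega_Y,k\llbracket t\rrbracket)$ holds functorially for any $Y$ and any $\delta$ without hypothesis; what $\delta\notin J_\infty(Y_{\rm sing})$ buys (as you correctly state at the end) is that the free rank of $\delta^*\Omega_Y$ is exactly $n$, so the dual is $k\llbracket t\rrbracket^{\oplus n}$ and the diagonal form has the right size.
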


We mention that if $X$ is a scheme of finite type over $k$, then
a theorem of Grinberg and Kazhdan \cite{GK} describes the formal neighborhood
of a $k$--valued arc $\gamma\in J_{\infty}(X)\smallsetminus J_{\infty}(X_{\rm sing})$
 as the product of the formal neighborhood of some point on a scheme of finite type, with the formal neighborhood of the origin on an infinite dimensional affine space. This result was reproved by Drinfeld in \cite{Drinfeld} using a projection of 
$X$ to an affine space. 
It would be interesting to extend the result in Theorem~\ref{thm2} to get 
more information on  $\widehat{J_{\infty}(Y)_{\delta}}$, and compare this approach to the one in \emph{loc. cit}.

Our interest in Theorem~\ref{thm1} is motivated by possible applications to the study of singularities.
The Change of Variable formula in \cite{DL} can be applied to describe invariants of singularities 
of pairs in terms of codimensions of contact loci in spaces of arcs in the presence of a resolution of singularities (see \cite{EM}). In positive characteristic however, resolutions are not available,
the only possible tool at this point being de Jong's alterations \cite{deJong}. The Change of Variable 
formula being restricted to birational morphisms, we view Theorem~\ref{thm1} as a possible replacement 
in this setting. We hope to return to this problem in the future.

\section{The tangent map of $f_{\infty}$}

In this section we prove Theorem~\ref{thm2} above. As we have
mentioned, the argument is a variation on the argument in Lemma~3.4
in \cite{DL} (see also \cite{Lo} and \cite{EM}). We start by
recalling some basic facts about Fitting ideals that we will use
(see \cite{eisenbud} for details and proofs).

Suppose that $\cF$ is a coherent sheaf on a Noetherian scheme $W$.
If $\cF$ has a free presentation
$$\cO_W^{\oplus a}\overset{\phi}\to\cO_W^{\oplus b}\to\cF\to 0,$$
then ${\rm Fitt}^r(\cF)$ is the ideal generated by the
$(b-r)$--minors of $\phi$ (by convention, this is $\cO_W$ if $r\geq
b$). In general, Fitting ideals are compatible with pull-back: if
$h\colon W'\to W$ is a morphism of schemes, then ${\rm
Fitt}^r(h^*\cF)={\rm Fitt}^r(\cF)\cdot\cO_{W'}$.
Since we work over a perfect base field, if $Y$ is a scheme of
finite type over $k$, of pure dimension $n$, then the singular locus
of $Y$ is defined by ${\rm Fitt}^n(\Omega_Y)$. 

Suppose now that $M$ is a finitely generated module over
$k\llbracket t\rrbracket$. Then
$$M\simeq k\llbracket t\rrbracket^{\oplus
m}\oplus k[t]/(t^{q_1})\oplus\cdots\oplus k[t]/(t^{q_s})$$ for some
$m, s\geq 0$ and $q_1,\ldots,q_s\geq 1$. In this case
$m=\dim_{k\llparenthesis t\rrparenthesis}M\otimes_{k\llbracket t\rrbracket}k\llparenthesis t
\rrparenthesis$, and
 ${\rm
Fitt}^m(M)=\left(t^{\sum_iq_i}\right)$.

\bigskip

\begin{proof}[Proof of Theorem~\ref{thm2}]
Let $\gamma\in J_{\infty}(X)$ be an arc lying over $x\in X$, that we
identify with the corresponding homomorphism $\cO_{X,x} \to
k\llbracket t\rrbracket$. A tangent vector at $\gamma$ to
$J_{\infty}(X)$ corresponds to a homomorphism
$\widetilde{\gamma}\colon \cO_{X,x}\to k\llbracket
s,t\rrbracket/(s^2)$ of the form $\gamma+sD$ for some
$D\colon \cO_{X,x}\to k\llbracket t\rrbracket$. The condition that
$\widetilde{\gamma}$ is a homomorphism is equivalent to $D$ being a
derivation, where $k\llbracket t\rrbracket$ is an $\cO_{X,x}$-module
via $\gamma$. In other words,
$$T_{\gamma}J_{\infty}(X)\simeq \Hom_{k\llbracket t\rrbracket}
(\Omega_{X,x}\otimes_{\cO_{X,x}}k\llbracket t\rrbracket, k\llbracket
t\rrbracket).$$ A similar description holds for
$T_{\delta}J_{\infty}(Y)$, where $\delta=f_{\infty}(\gamma)$.

Let $y=f(x)$. If we pull-back via $\gamma$ the exact sequence
$$
f^*\Omega_Y\to\Omega_X\to\Omega_{X/Y}\to 0,
$$
 we get an exact sequence
$$
\Omega_{Y,y}\otimes_{\cO_{Y,y}}k\llbracket t\rrbracket
\overset{A}\to\Omega_{X,x} \otimes_{\cO_{X,x}}k\llbracket
t\rrbracket \to \Omega_{X/Y}\otimes _{\cO_{X,x}}k\llbracket
t\rrbracket\to 0.
$$

By hypothesis $\delta\not\in J_{\infty}(Y_{\rm sing})$, hence $\delta$ maps the generic point
of ${\rm Spec}\,k\llbracket t\rrbracket$ to the nonsingular locus of $Y$. Let $n=\dim(X)=\dim(Y)$.
Since the ground field is perfect,
we deduce $\dim_{k\llparenthesis t\rrparenthesis} (\Omega_{Y,y}\otimes k\llparenthesis t
\rrparenthesis)=n$. Therefore 
$$\Omega_{Y,y}\otimes k\llbracket t\rrbracket\simeq
k\llbracket t\rrbracket^{\oplus n} \oplus k[t]/(t^{q_1})
\oplus\cdots\oplus k[t]/(t^{q_s}),$$ for some $q_1,\ldots,q_s\geq
1$. Moreover, $X$ is smooth, hence $\Omega_{X,x}\otimes k\llbracket
t\rrbracket\simeq k\llbracket t\rrbracket^{\oplus n}$. The
assumption that $\gamma^{-1}\left({\rm
Fitt}^0(\Omega_{X/Y})\right)=(t^e)$ implies that in suitable bases
we may write $A=({\rm diag}(t^{a_1},\ldots,t^{a_n}), {\mathbf
0}_{s,n})$, for some $a_1,\ldots,a_n\geq 0$ with $\sum_ia_i=e$. We
have identified the linear map $T_{\gamma}(f_{\infty})$ with
$$\Hom(A,k\llbracket t\rrbracket)
\colon \Hom\left(\Omega_{X,x}\otimes k\llbracket
t\rrbracket,k\llbracket t\rrbracket\right)\simeq k\llbracket
t\rrbracket^{\oplus n}\to \Hom\left(\Omega_{Y,y}\otimes k\llbracket
t\rrbracket,k\llbracket t\rrbracket\right)\simeq k\llbracket
t\rrbracket^{\oplus n}$$ that is given by ${\rm
diag}(t^{a_1},\ldots,t^{a_n})$. Therefore $T_{\gamma}(f_{\infty})$
is injective, and its cokernel has dimension $e$.
\end{proof}

\section{Generically finite morphisms between smooth varieties}

Suppose that $V$ is a vector space over $k$ (typically of infinite
dimension). We denote by $\widehat{{\rm Sym}(V)}$ the completion of
the symmetric algebra ${\rm Sym}(V)$ at the maximal ideal
$\oplus_{i>0}{\rm Sym}^i(V)$. This is a local ring, and if $\frm$
denotes its maximal ideal, then $\widehat{{\rm Sym}(V)}/\frm=k$ and
$\frm/\frm^2\simeq V$.

\begin{lemma}\label{lem1}
If $X$ is a smooth variety over $k$ and if $\gamma$ is an arc on
$X$, then there is a vector space $V$ such that
$$\widehat{\cO_{J_{\infty}(X),\gamma}}\simeq\widehat{{\rm Sym}(V)}.$$
\end{lemma}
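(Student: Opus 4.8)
The plan is to show that $\widehat{\cO_{J_\infty(X),\gamma}}$ is a power series ring (in the completed-symmetric-algebra sense) over $k$ on a suitable vector space $V$. Since $X$ is smooth, locally we may assume $X$ is \'etale over $\mathbf A^n$; in fact, after shrinking $X$ around the point $x$ over which $\gamma$ lies, we can choose an \'etale morphism $g\colon X\to\mathbf A^n$. The key point is that \'etale morphisms induce \'etale morphisms on arc spaces, so $g_\infty\colon J_\infty(X)\to J_\infty(\mathbf A^n)$ is formally \'etale at $\gamma$, hence induces an isomorphism on formal neighborhoods $\widehat{\cO_{J_\infty(X),\gamma}}\simeq\widehat{\cO_{J_\infty(\mathbf A^n),g_\infty(\gamma)}}$. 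Thus it suffices to treat $X=\mathbf A^n$.

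For $X=\mathbf A^n$ the arc space is simply $J_\infty(\mathbf A^n)=\mathbf A^n_\infty=\Spec k[x_i^{(j)}: 1\le i\le n,\ j\ge 0]$, where the universal arc sends $x_i\mapsto\sum_{j\ge0}x_i^{(j)}t^j$. This is an infinite-dimensional affine space, and the formal completion at any $k$-valued point $\gamma$ (given by specializing each $x_i^{(j)}$ to a scalar $c_i^{(j)}\in k$) is obtained by translating $\gamma$ to the origin: it is $\widehat{\cO_{J_\infty(\mathbf A^n),\gamma}}\simeq k\llbracket x_i^{(j)}-c_i^{(j)}\rrbracket$, which is exactly $\widehat{\mathrm{Sym}(V)}$ for $V$ the (countable-dimensional) vector space spanned by the variables $x_i^{(j)}-c_i^{(j)}$. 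Chasing through the \'etale isomorphism from the previous step then gives the result for general smooth $X$, with the same $V$ of dimension $\dim_k V=\aleph_0$ (equivalently, $V\simeq (\frm/\frm^2)$ where $\frm$ is the maximal ideal of $\widehat{\cO_{J_\infty(X),\gamma}}$).

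I expect the main technical point to be the assertion that $g_\infty$ is formally \'etale at $\gamma$, i.e.\ that \'etaleness is inherited by the induced map of arc schemes at the level of completed local rings. The cleanest way to see this is via the functor-of-points characterization recalled in the introduction: $\Hom(\Spec A, J_\infty(X))\simeq\Hom(\Spec A\llbracket t\rrbracket, X)$, so for an Artinian local $k$-algebra $A$ with residue field $k$ and a point $\gamma_A\colon\Spec A\to J_\infty(X)$ reducing to $\gamma$, lifting $\gamma_A$ along $g_\infty$ is the same as lifting the corresponding $A\llbracket t\rrbracket$-point of $\mathbf A^n$ along $g$ compatibly with its reduction mod the maximal ideal; since $g$ is \'etale, such a lift exists and is unique by the infinitesimal lifting criterion (applied to the surjection $A\llbracket t\rrbracket\to(A/\mathfrak a)\llbracket t\rrbracket$ with square-zero kernel, reducing to the Artinian case by a limit argument over the nilpotent ideal of $A$). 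This formal smoothness and unramifiedness together say precisely that $g_\infty$ induces an isomorphism on formal neighborhoods at $\gamma$, which is what we need.
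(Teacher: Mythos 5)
Your proof is correct in substance and follows the same broad strategy as the paper: after shrinking $X$ to get an \'etale map to $\mathbf{A}^n$, reduce to computing the formal neighborhood on $J_\infty(\mathbf{A}^n)$. Where you differ is in how you transfer the formal neighborhood across the \'etale map. The paper first invokes the fact that for \'etale $h\colon X\to\mathbf{A}^n$ the diagram of arc spaces is Cartesian over the diagram of underlying schemes, so that $J_\infty(X)\simeq\Spec R[x_i\mid i\geq 1]$ and $J_\infty(\mathbf{A}^n)\simeq\Spec S[x_i\mid i\geq 1]$, and then proves a separate algebraic lemma (Lemma~\ref{next_lemma}) showing that an \'etale ring map $S\to R$ induces an isomorphism on formal neighborhoods of the polynomial rings $S[x_i]\to R[x_i]$, by identifying local homomorphisms into test algebras with a pair consisting of a local homomorphism on the base plus the values of the $x_i$'s. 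You instead bypass the explicit presentation entirely and argue directly via the adjunction $\Hom(\Spec A, J_\infty(X))\simeq\Hom(\Spec A\llbracket t\rrbracket, X)$ that $g_\infty$ is formally \'etale at $\gamma$, using the infinitesimal lifting property of the \'etale map $g$ against the nilpotent thickening $A\llbracket t\rrbracket\to (A/\frm_A)\llbracket t\rrbracket$. This is a somewhat slicker and more intrinsic route; the paper's detour through the Cartesian square has the advantage of producing an explicit coordinate presentation of $J_\infty(X)$, which makes the final identification with $\widehat{\mathrm{Sym}(V)}$ immediate.

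One small technical caution: because $\widehat{\cO_{J_\infty(X),\gamma}}$ is not Noetherian, testing against \emph{Artinian} local $k$-algebras $A$ is, strictly speaking, not quite enough to conclude that the induced map of complete local rings is an isomorphism via Yoneda --- the quotients $\widehat{\cO_{J_\infty(X),\gamma}}/\frm^n$ themselves need not be Artinian. The paper is careful here: Lemma~\ref{next_lemma} allows $T$ to be any local $k$-algebra with residue field $k$ and $\frm_T^r=0$, without finiteness assumptions. Your lifting argument works verbatim for that broader class of test algebras (nilpotence of $\frm_A$ is all that is used), so the fix is only a matter of stating the test category correctly. With that adjustment your proof is complete.
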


\begin{proof}
If $\dim(X)=0$, then we may take $V=0$, hence we now assume that
$n=\dim(X)>0$. Suppose that $\gamma$ lies over $x\in X$. After
replacing $X$ by an open neighborhood of $x$, we may assume that $X={\rm Spec}\,R$
is affine and that we have an \'{e}tale map $h\colon X\to {\mathbf
A}^n$ with $h(x)=0$, that corresponds to $S=k[y_1,\ldots,y_n]\to R$.
We have a Cartesian diagram
\[
\begin{CD}
J_{\infty}(X) @>{h_{\infty}}>> J_{\infty}({\mathbf A}^n) \\
@VVV @VVV \\
X @>{h}>>{\mathbf A}^n
\end{CD}
\]
(see, for example, Lemma~2.9 in \cite{EM}).
Since $J_{\infty}({\mathbf A}^n)\simeq{\rm Spec}\,S[x_i\vert i\geq 1]$, we conclude that 
$J_{\infty}(X)\simeq {\rm Spec}\,R[x_i\vert i\geq 1]$, and Lemma~\ref{next_lemma} below implies 
 that the formal neighborhoods of $\gamma$ and
$h_{\infty}(\gamma)$ are isomorphic.
 Moreover, there is an automorphism of $J_{\infty}({\mathbf A}^n)$
that takes $h_{\infty}(\gamma)$ to the constant arc $\gamma_0$ over
the origin. As we have mentioned already, $J_{\infty}({\mathbf A}^n)$ is affine with
coordinate ring isomorphic to ${\rm Sym}\left(k^{({\mathbf N})}\right)$, and
such that the ideal of $\gamma_0$ corresponds to
$\oplus_{i>0}{\rm Sym}^i(k^{({\mathbf N})})$. This gives the conclusion of
the lemma.
\end{proof}.

\begin{lemma}\label{next_lemma}
Let $\phi\colon S\to R$ be an \'{e}tale morphism of $k$-algebras of finite type, and let
$\psi\colon S_{\infty}:=S[x_i\vert i\geq 1]\to R_{\infty}:=R[x_i\vert i\geq 1]$ be the induced map between the corresponding polynomial rings in countably many variables. If $\frm$ is a maximal ideal in
$R_{\infty}$ with residue field $k$, and if $\frn=\psi^{-1}(\frm)$, then we get an induced ring
isomorphism $\widehat{(S_{\infty})_{\frn}}\simeq \widehat{(R_{\infty})_{\frm}}$.
\end{lemma}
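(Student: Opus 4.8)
The plan is to reduce the statement to the standard fact that an \'{e}tale morphism identifies the complete local rings at two points lying over one another as soon as the residue field extension is trivial. Since $S_\infty$ and $R_\infty$ are not Noetherian, I would not quote a reference valid only in the Noetherian setting, but instead run the argument by hand; the only ingredients needed are flatness and a computation of one fiber.

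First I would note that $\psi$ is \'{e}tale. \'{E}tale morphisms are stable under base change, and $R_\infty=R[x_i\mid i\ge 1]=S_\infty\otimes_S R$, so $\psi$ is obtained from the \'{e}tale morphism $\phi$ by the base change $S\to S_\infty$. In particular $\psi$ is flat, and since $R$ is finitely presented over $S$ (\'{e}tale over the Noetherian ring $S$), the ring $R_\infty$ is finitely presented over $S_\infty$.

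Set $A=(S_\infty)_\frn$, $B=(R_\infty)_\frm$, $\fra=\frn A$, $\frb=\frm B$; then $A\to B$ is a flat local homomorphism with $A/\fra=k=B/\frb$ (the equality $B/\frb=k$ is the hypothesis, and $A/\fra=k$ follows since $A/\fra$ lies between $k$ and $B/\frb=k$). I would first check that $\fra B=\frb$. The ring $B/\fra B$ is the localization at $\frm$ of the fiber $R_\infty/\frn R_\infty$, which is an \'{e}tale $k$-algebra of finite type, hence, $k$ being algebraically closed, a finite product of copies of $k$; localizing at the maximal ideal $\frm$ gives $B/\fra B=k$. Thus $\fra B$ is the maximal ideal of the local ring $B$, i.e.\ $\fra B=\frb$, and therefore $\fra^jB=\frb^j$ for all $j\ge 1$.

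The core of the proof is to show that the natural map $A/\fra^j\to B/\frb^j$ is an isomorphism for every $j\ge 1$, by induction on $j$; the case $j=1$ is $A/\fra=k=B/\frb$. For the step from $j$ to $j+1$ I would apply $B\otimes_A(-)$ to $0\to\fra^j/\fra^{j+1}\to A/\fra^{j+1}\to A/\fra^j\to 0$: flatness of $B$ over $A$ keeps the sequence exact and, using $\fra^iB=\frb^i$, turns it into $0\to\frb^j/\frb^{j+1}\to B/\frb^{j+1}\to B/\frb^j\to 0$. The rightmost vertical map is an isomorphism by induction, and the leftmost one is too, because $\fra^j/\fra^{j+1}$ is a $k$-vector space, so $B\otimes_A(\fra^j/\fra^{j+1})=(B/\fra B)\otimes_k(\fra^j/\fra^{j+1})=\fra^j/\fra^{j+1}$ with the natural map being the identity; the five lemma then gives the isomorphism $A/\fra^{j+1}\to B/\frb^{j+1}$. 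Passing to the inverse limit over $j$ yields
$$\widehat{(S_\infty)_\frn}=\varprojlim_j A/\fra^j\ \xrightarrow{\sim}\ \varprojlim_j B/\frb^j=\widehat{(R_\infty)_\frm},$$
which is the asserted isomorphism. The one point requiring care --- and the reason I avoid appealing to theorems about completions of Noetherian local rings --- is that none of these rings is Noetherian; but the argument above uses only flatness of $B$ over $A$ and the description of the closed fiber, so it goes through unchanged.
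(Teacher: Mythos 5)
Your proof is correct, and it takes a genuinely different route from the paper's. The paper argues via the functor of points on artinian local $k$-algebras: it shows directly that
${\rm Hom}_{\rm loc}((R_{\infty})_{\frm},T)\to {\rm Hom}_{\rm loc}((S_{\infty})_{\frn},T)$
is a bijection for every local artinian $k$-algebra $(T,\frm_T)$ with residue field $k$, by decomposing a local homomorphism out of $(R_{\infty})_{\frm}$ (resp.\ $(S_{\infty})_{\frn}$) into its restriction to $R_{\frm_0}$ (resp.\ $S_{\frn_0}$) together with the choice of images in $T$ of the countably many variables $x_i$, and then invoking the formal-\'etale property of $\phi$ to match the two. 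Your argument instead goes through the structural properties of \'etale morphisms themselves: you observe that $\psi$ is \'etale (base change), hence flat, compute the closed fiber of $A\to B$ to get $\fra B=\frb$ (using that an \'etale algebra over the algebraically closed field $k$ is a product of copies of $k$), and then prove $A/\fra^j\to B/\frb^j$ is an isomorphism by a careful induction using flatness, finally passing to the inverse limit. Both arguments work and have roughly the same length; the paper's approach has the advantage of never touching the non-Noetherian ring theory directly (everything is pushed into a $T$-valued point calculation), while your approach proves, in effect, the general non-Noetherian statement that a flat local homomorphism with trivial closed fiber induces an isomorphism of completions, which is a cleaner conceptual fact and makes the role of \'etaleness transparent. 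One small stylistic point: it is worth saying a word on why $\frn$ is maximal with residue field $k$ (namely that $S_\infty/\frn$ is a $k$-subalgebra of $R_\infty/\frm=k$ containing $k$), since you use $A/\fra=k$ in the base case; you do gesture at this but a reader may pause there.
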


\begin{proof}
It is enough to show that for every local $k$-algebra $(T,\frm_T)$, with residue field $k$,
and such that $\frm_T^r=(0)$ for some $r\geq 1$, the induced map
\begin{equation}\label{eq_next_lemma}
{\rm Hom}_{\rm loc}((R_{\infty})_{\frm},T)\to {\rm Hom}_{\rm loc}((S_{\infty})_{\frn},T)
\end{equation}
is a bijection (here we denote by ${\rm Hom}_{\rm loc}$ the set of local morphisms
of local $k$-algebras).

Let us denote by
$\frm_0$ and $\frn_0$ the intersections of $\frm$ and $\frn$ with $R$ and $S$, respectively. 
By assumption, for every $i$ we have some $a_i\in k$ such that $x_i-a_i\in\frm$
(hence also $x_i-a_i\in\frn$). It is straightforward to check
 that giving a local homomorphism $g\colon (S_{\infty})_{\frn}
\to T$ is equivalent to giving a local homomorphism $S_{\frn_0}\to T$, and elements 
$u_i\in T$ such that $u_i-a_i\in\frm_T$ for every $i$ (more precisely, $g$ maps each $x_i$
to $u_i$). A similar description holds for the local homomorphisms $(R_{\infty})_{\frm}\to T$.
On the other hand, since $\phi$ is \'{e}tale, the induced map
$${\rm Hom}_{\rm loc}(R_{\frm_0},T)\to {\rm Hom}_{\rm loc}(S_{\frn_0},T)$$
is bijective (see, for example, \S 28 in \cite{Mat} for the basics on 
\'{e}tale ring morphisms). Putting together the above facts, we conclude that 
(\ref{eq_next_lemma}) is a bijection.
\end{proof}

\bigskip

\begin{proof}[Proof of Theorem~\ref{thm1}]
Let $(A,\frm_A)$ and $(B,\frm_B)$ denote the local rings corresponding to the
formal neighborhoods of $\gamma$ and $\delta$, respectively, and let
$u\colon B\to A$ be the local homomorphism induced by
$f_{\infty}$. It follows from Theorem~\ref{thm2} that if we denote
by $\overline{u}$ the induced $k$-linear map
$\frm_B/\frm_B^2\to\frm_A/\frm_A^2$, then $\overline{u}^*={\rm
Hom}_k(\overline{u},k)$ is injective and $\dim_k{\rm
coker}(\overline{u}^*)=e$. Therefore $\overline{u}$ is surjective
and $\dim_k{\rm ker}(\overline{u})=e$.

Let $U$ be a complement of $V:={\rm ker}(\overline{u})$ in
$\frm_B/\frm_B^2$. It follows from Lemma~\ref{lem1} that we have an
isomorphism $B\simeq \widehat{{\rm Sym}(U)}\,\widehat{\otimes}\,\widehat
{{\rm Sym}(V)}$. On the other hand, the map $v$ given
by the composition $\widehat{{\rm Sym}(U)}\to B\to A$ induces an
isomorphism $\overline{v}\colon U\to \frm_A/\frm_A^2$. Since
$\widehat{{\rm Sym}(U)}$ is complete, it follows that $v$ is
surjective. Moreover, by Lemma~\ref{lem1} we have
$A\simeq\widehat{{\rm Sym}(\frm_A/\frm_A^2)}$, hence we can find a
local homomorphism $w\colon A\to \widehat{{\rm Sym}(U)}$ that induces
$\overline{w}=\overline{v}^{-1}\colon \frm_A/\frm_A^2\to U$, and
such that $v\circ w=1_A$. We deduce that $w$ is surjective, hence an
isomorphism, and therefore $v$ is an isomorphism as well.
\end{proof}


\end{document}